\documentclass{article}

\usepackage[utf8]{inputenc}
\usepackage{authblk}
\usepackage{setspace}
\usepackage[margin=1.25in]{geometry}
\usepackage{graphicx}
\graphicspath{ {./figures/} }
\usepackage{subcaption}
\usepackage{amsmath}
\usepackage{makecell}
\usepackage{lineno}
\usepackage{amsfonts}
\usepackage{amssymb, amsthm, amsmath, amsfonts}
\usepackage{wasysym}
\usepackage{mathrsfs}
\usepackage{hyperref}
\usepackage{graphicx}
\usepackage{lineno}
\usepackage[colorinlistoftodos]{todonotes}
\usepackage{listings}
\usepackage{cancel, enumerate}
\usepackage{rotating, environ}
\usepackage{caption}
\usepackage{subcaption}
\usepackage[inline]{enumitem}
\usepackage{dirtree}
\usepackage{xcolor}
\usepackage{tipa}
\usepackage{float}
\usepackage{changepage}






\def\sbng{\bngviii}

\def\bng{\bngx}
\def\lbng{\bngxiv}

%

%



\font\bngviii=bang10 scaled 800

\font\bngx=bang10

\font\bngxiv=bang10 scaled 1400




\def\*#1*#2{o\null{#2}{#1}}


\def\sh#1{\setbox0=\hbox{#1}%
     \kern-.02em\copy0\kern-\wd0
     \kern.04em\copy0\kern-\wd0
     \kern-.02em\raise.0433em\box0 }

\newcommand{\haw}{\text{\bng H}}
\newcommand{\footnoteh}{\text{\sbng H}}
\newcommand{\sectionh}{\text{\lbng H}}

\newtheorem{defn}{Definition}
\newtheorem{prop}{Proposition}
\newtheorem{lemma}{Lemma}
\newtheorem{cor}{Corollary}
\newtheorem{ex}{Example}
\newtheorem{conj}{Conjecture}
\newtheorem{rmk}{Remark}
\newtheorem{qstn}{Question}
\newtheorem{thm}{Theorem}

\renewcommand{\P}{\mathbb{P}}
\newcommand{\subseq}{\subseteq}
\newcommand{\N}{\mathbb{N}}
\newcommand{\Z}{\mathbb{Z}}

\newcommand{\Q}{\mathbb{Q}}

\newcommand{\R}{\mathbb{R}}

\newcommand{\ds}{\displaystyle}

\newcommand{\Aut}{\text{Aut}}

\newcommand{\Id}{\text{Id}}
\newcommand{\C}{\mathbb{C}}

\newcommand{\F}{\mathbb{F}}

\newcommand{\1}{\mathds{1}}

\renewcommand{\P}{\mathbb{P}}

\newcommand{\inv}{^{-1}}

\newcommand{\dv}{\mathsf{d}}

\makeatletter
\newcommand{\tpitchfork}{%
  \vbox{
    \baselineskip\z@skip
    \lineskip-.52ex
    \lineskiplimit\maxdimen
    \m@th
    \ialign{##\crcr\hidewidth\smash{$-$}\hidewidth\crcr$\pitchfork$\crcr}
  }%
}
\makeatother

\newcommand{\Pcal}{\mathcal{P}}

\newcommand{\+}{\oplus}
\renewcommand{\H}{\mathcal{H}}
\newcommand{\hC}{\widehat{\C}}

\newcommand{\ddeg}{\text{ddeg\,}}
\newcommand{\Pk}{\mathfrak{P}}






\title{The dynamics of the Hesse derivative on the $j$-invariant}

\author{Jake Kettinger}

\affil{Colorado State University}

\date{25 March 2026}

\onehalfspacing

\begin{document}

\maketitle

\begin{abstract}
In this paper, we study the Hesse derivative of a cubic curve on the set of $j$-invariants, which can be viewed as a rational function on the Riemann sphere.  We then analyze the dynamics of this rational function, including counting the number of orbits of a given size. We proceed to investigate when a cubic curve is isomorphic to its $n$-fold Hesse derivative, showing that when an elliptic curve has a $j$-invariant that is periodic under this rational function, the curve itself must be periodic under the Hesse derivative. We finish with some data collected comparing the sizes of the orbits of elliptic curves to those of their $j$-invariants, and some further questions about this dynamical system.
\end{abstract}

\section{Introduction}
In this paper we are interested in studying the dynamics of the rational function on the Riemann sphere defined by \begin{align*}H(j)=\frac{(6912-j)^3}{27j^2}.\end{align*} In Theorem 3.4 of \cite{PP}, this function was shown to relate the $j$-invariant of an elliptic curve to the $j$-invariant of its Hesse derivative. The dynamics of this function were subsequently explored in \cite{CS}, and in any characteristic different from 2 or 3 (0 included) \cite{MPT}. In Section 2 of this paper, we provide a direct proof of the derivation of this function $H$ in Proposition \ref{jdyn}. In the subsequent section, we delve into the dynamical properties of this function, culminating in Theorem \ref{thm2}, wherein we compute the number of orbits of any given size. This is a similar computation to that of the number of Hesse loops of length of even length provided in Theorem 22 of \cite{DHH}.  In the fourth section of this paper, we study the effect of the Hesse derivative on the fibers of an elliptic fibration. We use the Hessian group of an elliptic fibration to prove Theorem \ref{ffo}, which states that an elliptic curve whose $j$-invariant is periodic under $H$ is itself periodic under the Hesse derivative. Interestingly, the size of the orbit of $j$ under $H$ does not uniquely determine the size of the orbit of the curve under the Hesse derivative, as the data tables at the end of the section show. We end the paper with some open questions for further pursuit. 


\section{Background}
\subsection{The Hesse Derivative}
Throughout this paper we will work in the complex projective plane $\P_\C^2$, which we will henceforth denote $\P^2$. Let $f$ be a reduced homogenous polynomial of degree 3 in 3 variables. Then $f$ defines a cubic curve $V(f)$ in $\P^2$. It is known by Prop 3.1, Ch III of \cite{S} that $V(f)$ is isomorphic to $V(w)$ where $$w=y^2z-x^3-axz^2-bz^3$$ where $a,b\in\C$. This expression is called the \textbf{Weierstrass form} of the curve. The $j$-invariant of a cubic curve can be calculated from an isomorphic Weierstrass form as follows: $$j=1728\cdot\frac{4a^3}{4a^3+27b^2}\in\hC,$$ where $\hC$ is the Riemann sphere.

In this paper we will be investigating the effect of the Hesse derivative on the $j$-invariant of a cubic curve. The Hesse derivative $\haw C$\footnote{We use the notation introduced in \cite{DHH} of using the Bengali letter $\footnoteh$ (pronounced ``Haw") to denote the Hesse derivative.} of a curve $C$ given by the polynomial $f$ is the curve determined by the determinant of the Hesse matrix $$\H(f)=\left|\begin{matrix}
    f_{xx}&f_{xy}&f_{xz}\\
    f_{xy}&f_{yy}&f_{yz}\\
    f_{xz}&f_{yz}&f_{zz}\\
\end{matrix}\right|.$$ As one can see the degree of the Hesse derivative of a curve of degree $d$ is $3(d-2)$ if the Hessian does not vanish. Thus when $\deg C=3$, we have $\deg\haw C=3$. So the Hesse derivative can be viewed as a self-map on the family of cubic curves in $\P^2$. When $C$ is a trilateral, $\haw C=C$.

\subsection{Computing the $j$-invariant of $\sectionh C$} The goal of this section is to compute the $j$-invariant $j(\haw C)$ of $\haw C$ in terms of $j(C)$. We will begin with the following lemma.

\begin{lemma}\label{jlemma}
Let $C$ be an elliptic curve given by the polynomial in Weierstrass form $$f=y^2z-x^3-axz^2-bz^3.$$ Then \[j(\haw C)=\begin{cases}j(C)\cdot\ds\frac{(a^3+9b^2)^3}{a^9}&a\neq 0,\\
\infty&a=0.\\\end{cases}\]
\end{lemma}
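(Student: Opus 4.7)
The plan is to directly compute the Hesse determinant of $f$, reduce the resulting cubic to Weierstrass form via an explicit linear change of variables, and then feed the new $a',b'$ into the $j$-invariant formula.

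First I would compute the partial derivatives of $f=y^2z-x^3-axz^2-bz^3$ and write out the Hesse matrix. Expanding the determinant (say along the first row, since $f_{xy}=0$) collapses quickly, giving, up to a nonzero constant,
$$\H(f) \;=\; 8\bigl(3xy^2+3ax^2z+9bxz^2-a^2z^3\bigr).$$
Thus $\haw C=V(g)$ where $g=3xy^2+3ax^2z+9bxz^2-a^2z^3$. The striking feature here is that the role of $x$ and $z$ has been switched: $g$ has an $xy^2$ term, not a $y^2z$ term.

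Next I would bring $g$ to Weierstrass form by a sequence of isomorphisms of $\P^2$. Interchange $x\leftrightarrow z$ (this is a projective isomorphism, so does not affect the $j$-invariant), obtaining $-a^2x^3+9bx^2z+3axz^2+3y^2z$. Divide through by $a^2$ to normalize the leading coefficient, rescale $y\mapsto (a/\sqrt 3)\,y$ to normalize the coefficient of $y^2z$ to $1$, and then eliminate the $x^2z$ term by the substitution $x\mapsto x+\tfrac{3b}{a^2}z$ (i.e.\ complete the cube). After collecting terms, this puts the curve in Weierstrass form $y^2z=x^3+a'xz^2+b'z^3$ with
$$a'=-\frac{3(a^3+9b^2)}{a^4},\qquad b'=-\frac{9b(a^3+6b^2)}{a^6}.$$

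The last step is to apply $j(\haw C)=1728\cdot\dfrac{4(a')^3}{4(a')^3+27(b')^2}$ and simplify. After clearing powers of $a$, the numerator is a constant multiple of $(a^3+9b^2)^3$, and the denominator is a constant multiple of
$$4(a^3+9b^2)^3-81b^2(a^3+6b^2)^2.$$
The main (and essentially only) obstacle is recognizing that this last expression admits a clean factorization. Setting $u=a^3$ and $v=b^2$, an elementary expansion shows
$$4(u+9v)^3-81v(u+6v)^2=u^2(4u+27v),$$
i.e.\ $a^6(4a^3+27b^2)$, at which point the ratio collapses to $\dfrac{4a^3}{4a^3+27b^2}\cdot\dfrac{(a^3+9b^2)^3}{a^9}$. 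Multiplying by $1728$ and recognizing the first factor as $j(C)/1728$ yields the claimed formula. The whole proof is thus a Hessian calculation, a change of coordinates, and one polynomial identity in two variables.
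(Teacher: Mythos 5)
Your proposal is correct and follows essentially the same route as the paper: compute the Hessian (you get the same $8(3xy^2+3ax^2z+9bxz^2-a^2z^3)$), swap $x\leftrightarrow z$, reduce to Weierstrass form by scaling and completing the cube, and plug the resulting coefficients into the $j$-formula; your $a'=-3(a^3+9b^2)/a^4$, $b'=-9b(a^3+6b^2)/a^6$ and the identity $4(u+9v)^3-81v(u+6v)^2=u^2(4u+27v)$ all check out. The only difference is that you carry out by hand the final simplification that the paper delegates to Mathematica/Macaulay2, which is a pleasant but not structurally different touch.
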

\begin{proof}

The $a=0$ case comes from the fact that then $\haw C$ is a trilateral, so $j(\haw C)=\infty$. Assuming $a\neq 0$, applying the Hessian to $f$ gives us the polynomial $$\H(f)(x,y,z)=24xy^2-8a^2z^3+24ax^2z+72bxz^2=8( 3xy^2-a^2z^3+3ax^2z+9bxz^2).$$ Our goal now is to perform a sequence of linear transformations on this polynomial to achieve a Weierstrass form, so that we may compute the $j$-invariant.  By performing a change of coordinates, we can attain a cubic curve isomorphic to $\H(f)$ that is in Weierstrass form; specifically, by defining $\alpha:=3a^2$, $\beta:=-a^8$, $\gamma:=3a^7+27a^4b^2$, and $\delta:=9a^5b+54a^2b^3$, we have $$\H(f)\left(a^2z,\sqrt{\beta}y,-a^2\sqrt[3]{\alpha}x+3bz\right)/(8\alpha\beta)=y^2z-x^3-\frac{\sqrt[3]{\alpha}\gamma}{\alpha\beta}xz^2-\frac{-\delta}{\alpha\beta}z^3$$ in Weierstrass form.
%

Now recall that the $j$-invariant of $C$ is $j(C)=1728\cdot\ds\frac{4a^3}{4a^3+27b^2}$.

Since the curves defined by the polynomials $\H(f)(x,y,z)$ and $\frac{\H(f)\left(a^2z,\sqrt{\beta}y,-a^2\sqrt[3]{\alpha}x+3bz\right)}{8\alpha\beta}$ are isomorphic, they have the same $j$-invariant, and does not depend of the choice of roots $\sqrt[3]{\alpha},\sqrt{\beta}$.




Mathematica  \cite{Mathematica} tells us \begin{align*}\H(f)\left(a^2z,\sqrt{\beta}y,-a^2\sqrt[3]{\alpha}x+3bz\right)/(8\alpha\beta)\\=-x z^2 \left(-\frac{9 \sqrt[3]{3} \sqrt[3]{a^2} b^2}{a^6}-\frac{\sqrt[3]{3} \sqrt[3]{a^2}}{a^3}\right)-z^3 \left(\frac{18 b^3}{a^8}+\frac{3 b}{a^5}\right)-x^3+y^2 z.\end{align*}

Using Mathematica, the $j$-invariant of this is $$j(\haw C)=1728\cdot\frac{4(a^3+9b^2)^3}{a^6(4a^3+27b^2)}=j(C)\cdot\frac{(a^3+9b^2)^3}{a^9}.$$

\end{proof}
The \href{https://arxiv.org/abs/2408.04117v2}{earlier version of this article on arXiv} also has some Macaulay2 code available to help further elucidate this proof if so desired.

Now we can improve the result of the lemma and write $j(\haw C)$ purely in terms of $j(C)$.

\begin{prop}\label{jdyn}
    Consider an elliptic curve $C$ with $j$-invariant $j$. Then the $j$-invariant of $\haw C$ is $\ds\frac{(6912-j)^3}{27j^2}$. This coincides with Theorem 3.4 of \cite{PP}.
\end{prop}
\begin{proof}
By Lemma \mbox{\ref{jlemma}}, $j(\haw C)=j(C)\cdot\ds\frac{(a^3+9b^2)^3}{a^9}=j(C) \left(\ds\frac{a^3+9b^2}{a^3}\right)^3=\displaystyle j(C)\left(\frac{1728}{j(C)}+\frac{9b^2}{4a^3}\right)^3.$ 

Now note that $$\ds\frac{9b^2}{4a^3}-\frac{1728}{j(C)}=\frac{9b^2}{4a^3}-\frac{4a^3+27b^2}{4a^3}=\frac{-4a^3-18b^2}{4a^3}=-1-2\left(\frac{9b^2}{4a^3}\right).$$ Solving for $\ds\frac{9b^2}{4a^3}$, we get $\ds\frac{9b^2}{4a^3}=\frac{1728j(C)\inv-1}{3}$.

Therefore $j(\haw C)=j(C)\cdot\left(\ds\frac{1728}{j(C)}+\frac{1728j(C)\inv-1}{3}\right)^3$. Simplifying this yields $$j(\haw C)=\frac{(6912-j(C))^3}{27j(C)^2}.$$
\end{proof}
It is worth observing that the two most stand-out fixed points of this rational function are $j=1728$ and $j=\infty$. The latter $j$-invariant corresponds to that of a trilateral $T$, in which case $\haw T=T$. The subsequent sections will conduct a more thorough survey of the periodic $j$-invariants.
\section{The dynamics of $H$}
\subsection{Periodic $j$-invariants}
For an elliptic curve $C$, we will use the rational function \begin{align*}&H:\widehat{\C}\to \widehat{\C}\\ \text{defined by }& H(j)=\ds\frac{(6912-j)^3}{27j^2}\end{align*} 
to understand the behavior of $j(\haw^nC)$.

\begin{defn}
Let $f:\hC\to\hC$ be a rational function and $p\in\hC$. Then the \textbf{forward orbit} (or simply \textbf{orbit}) of $p$ is $\text{Orb}_f(p)=\{f^{n}(p):n>0\}$,where $f^{ n}$ denotes the $n$-fold composition of $f$ with itself.
\end{defn}
\begin{defn}
Let $f:\hC\to\hC$ be a rational function. Then $p\in\hC$ is \textbf{fixed} if $f(p)=p$. The point $p$ is \textbf{periodic} if $f^{n}(p)=p$ for some $n>0$. The point $p$ is called \textbf{preperiodic} if there exists an $n>0$ where $f^{n}(p)$ is periodic. In the case $p$ is periodic, its orbit $\text{Orb}_f(p)$ is a \textbf{cycle}.

\end{defn}

\begin{ex}
Note that $H(j)=1728$ whenever $(6912-j)^3=46656j^2$. This gives us the polynomial $j^3+25920j^2+3\cdot 6912^2j-6912^3=(j-1728)(j+13824)^2$. This gives us two roots: one at $j=1728$ and one at $j=-13824=-8\cdot 1728$, which are fixed and preperiodic, respectively.
\end{ex}
\begin{ex}\label{order2}
Note that $H(H(j))$ can be rewritten as $$H(H(j))=\frac{\left(j^3+165888 j^2+143327232 j-330225942528\right)^3}{729 (j-6912)^6 j^2}.$$ Thus $H(H(j))=j$ when $$\left(j^3+165888 j^2+143327232 j-330225942528\right)^3=729 (j-6912)^6 j^3.$$ Along with the fixed point of $j=\infty$, this gives us a degree-9 polynomial with roots \begin{align*}
    j&=1728\\
    j&=\frac{3456}{7} \left(-1-3 i \sqrt{3}\right)\\
    j&=\frac{3456}{7} \left(-1+3 i \sqrt{3}\right)\\
    j&=3456 \left(5-3 \sqrt{3}\right)\\
    j&=3456 \left(5+3 \sqrt{3}\right)\\
    j&=-i5184  \sqrt{3}-\frac{1}{2} \sqrt{-\frac{4514807808}{13}-i\frac{644972544 \sqrt{3}}{13}}+1728\\
    j&=-i5184  \sqrt{3}+\frac{1}{2} \sqrt{-\frac{4514807808}{13}-i\frac{644972544  \sqrt{3}}{13}}+1728\\
    j&=i5184  \sqrt{3}-\frac{1}{2} \sqrt{-\frac{4514807808}{13}+i\frac{644972544 \sqrt{3}}{13}}+1728\\
    j&=i5184 \sqrt{3}+\frac{1}{2} \sqrt{-\frac{4514807808}{13}+i\frac{644972544  \sqrt{3}}{13}}+1728.\\
\end{align*}
So when the $j$-invariant of a cubic curve $C$ is one of these ten points of $\hC$, then $\haw^2C\cong C$. Note that for the latter six of these values, we will have $\haw C\not\cong C$. For example, $H$ transposes the two values $3456\left(5-3\sqrt{3}\right)$ and $3456\left(5+3\sqrt{3}\right)$.
\end{ex}
An interesting question that we investigate in the next section is when do we have $\haw^n C= C$ instead of simply $\haw^n C\cong C$. It is already known that $\haw C=C$ when $j(C)=\infty$ (when $C$ is a trilateral), and that $\haw^2 C=C$ when $j(C)\in\{\infty,1728\}$. In the $j(C)=1728$ case, $C$ is called harmonic \cite{CO}.

\begin{defn}
Let $f:\hC\to\hC$ be a rational function, and let $\{p_1,p_2,\dots,p_m\}\subseq\C$ form a cycle of size $m$ under $f$. That is, the $p_i$ are periodic points of period $m$. Then the \textbf{multiplier} $\lambda$ of the orbit is $$(f^{m})'(p_i)=\prod_{i=1}^mf'(p_i),$$ see \cite{B}. In the special case that one of the $p_i$ is $\infty$, then $$\lambda=\frac{1}{(f^{m})'(\infty)}.$$
\end{defn}

\begin{defn}
For a rational function $f:\hC\to\hC$, a cycle with multiplier $\lambda$ is called \textbf{superattracting}, \textbf{attracting}, \textbf{repelling}, or \textbf{indifferent} if $\lambda=0$, $|\lambda|<1$, $|\lambda|>1$, or $|\lambda|=1$, respectively. Furthermore, the cycle is called \textbf{rationally indifferent} if $\lambda$ is a root of unity, and \textbf{irrationally indifferent} if $|\lambda|=1$ but $\lambda$ is not a root of unity.
\end{defn}

We will begin by finding the Julia set of $H$, which is defined as follows.

\begin{defn}
For a rational function $f:\hC\to\hC$ the \textbf{Julia set} $J(f)$ is the closure of the set of repelling periodic points. 
\end{defn}

\begin{defn}
Let $f:\hC\to\hC$ be a rational function. Then $f$ is \textbf{post-critically finite} (\textbf{PCF}) if all the critical points (i.e. points where the derivative is 0 or undefined) of $f$ are preperiodic.
\end{defn}

It is shown in Corollary 16.5 of \cite{M} that a PCF rational function with no periodic critical points has the entire Riemann sphere $\hC$ as its Julia set.

\begin{prop}
The rational function $H$ is a post-critically finite function with no periodic critical points, so $J(H)=\hC$.
\end{prop}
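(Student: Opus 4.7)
The plan is to explicitly find the critical points of $H$, trace their forward orbits, observe that each one lands in a fixed point of $H$ without itself being fixed, and then quote Milnor's Corollary 16.5.

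First I would differentiate $H(j) = (6912-j)^3/(27j^2)$ using the quotient rule; after simplification the derivative factors as
$$H'(j) = -\frac{(6912-j)^2(j+13824)}{27j^3}.$$
Hence the critical points of $H$ in $\C$ are $j = 6912$ (double root of $H'$, giving local degree $3$) and $j = -13824$ (simple root of $H'$, local degree $2$), together with the pole $j = 0$, at which $H$ has local degree $2$. To see that $\infty$ contributes no further ramification, I would apply Riemann--Hurwitz: since $H$ has degree $3$ on the sphere, the total ramification equals $2\cdot 3 - 2 = 4$, and the three critical points already account for $(3-1)+(2-1)+(2-1) = 4$. Equivalently, the leading behavior $H(j) \sim -j/27$ shows that $\infty \mapsto \infty$ with local degree $1$.

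Next I would compute the forward orbit of each critical point. Directly, $H(6912) = 0$, $H(0) = \infty$, and $H(\infty) = \infty$, giving the orbit $6912 \mapsto 0 \mapsto \infty \mapsto \infty$. For $-13824$, the identities $6912 + 13824 = 3\cdot 6912$ and $13824 = 2\cdot 6912$ (noted essentially in Example 1) yield $H(-13824) = 1728$, and then $6912 - 1728 = 3\cdot 1728$ gives $H(1728) = 1728$, so the orbit is $-13824 \mapsto 1728 \mapsto 1728$. Thus every critical point is preperiodic.

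Finally, none of the critical points is periodic: the orbit of each one terminates at a fixed point (either $\infty$ or $1728$), and neither $\infty$ nor $1728$ appears in the list $\{6912, -13824, 0\}$ of critical points. Therefore $H$ is PCF with no periodic critical points, and Corollary 16.5 of \cite{M} gives $J(H) = \hC$. The main subtlety is not any single computation but the bookkeeping at $\infty$: I want to be sure $\infty$ is a non-critical fixed point, which is why I would explicitly invoke Riemann--Hurwitz (or a coordinate change $w = 1/j$) rather than rely solely on the factorization of $H'$ over $\C$.
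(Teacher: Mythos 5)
Your proposal is correct and follows essentially the same route as the paper: compute $H'$, identify the critical points $6912$, $-13824$, $0$, trace their forward orbits to the fixed points $\infty$ and $1728$, and invoke Corollary 16.5 of \cite{M}. Your extra bookkeeping via Riemann--Hurwitz to confirm that $\infty$ is a non-critical fixed point is a nice refinement the paper leaves implicit, but it does not change the argument.
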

\begin{proof}
The derivative of $H$ is $$H'(j)=-\frac{(6912-j)^2(j+13824)}{27j^3}.$$ Thus $H$ has three critical points: $6912$, $-13824$, and $0$. Note that $\infty$ is not a critical point because $H'(\infty)=-\frac{1}{27}$. We will see that all three critical points are preperiodic.

First note that $H(6912)=0$, $H(0)=\infty$, and $H(\infty)=\infty$ is a fixed point. Thus $j=6912$ is a preperiodic point, but not periodic.

Next, we have $H(-13824)=1728$, and $H(1728)=1728$ is a fixed point, so $j=-13824$ is a preperiodic point, but not periodic.

Finally, we already saw that $H(0)=H(H(0))=\infty$, so $j=0$ is a preperiodic point, but not periodic.

Thus all three critical points of $H$ are preperiodic, and so $H$ is PCF. Since none of the critical points are periodic, we then know that $J(H)=\hC$.
\end{proof}




As a map $H:\widehat{\C}\to\widehat{\C}$ on the Riemann sphere, $H$ has four fixed points: $$1728,\,\ds\frac{3456}{7}(-1+3i\sqrt{3}),\,\ds\frac{3456}{7}(-1-3i\sqrt{3}),\text{ and }\infty.$$ The four fixed points have $\lambda$ multipliers of $$-3,\,\ds-\frac{3}{2}-i\frac{\sqrt{3}}{2},\,\ds-\frac{3}{2}+i\frac{\sqrt{3}}{2},\text{ and }-27$$ respectively. All of the multipliers are have absolute values greater than 1, so all four fixed points are repelling.

\subsection{The number of orbits of size $n$}

\begin{defn}
    Let $\ds\frac{f(z)}{g(z)}:\hC\to\hC$ be a rational function, with $f,g$ polynomials. Then the \textbf{degree} of $f(z)/g(z)$ is $\deg(f/g)=\max\{\deg f(z),\deg g(z)\}$. The \textbf{difference-degree} of $f(z)/g(z)$ is $\emph{ddeg\,}(f/g)=\deg(f)-\deg(g)$.
\end{defn}
\begin{defn}
Let $\ds\frac{f(z)}{g(z)}:\hC\to\hC$ be a rational function, with $f,g$ polynomials. Then $f(z)/g(z)$ is \textbf{reduced} if $f$ and $g$ have no roots in common.
\end{defn}
Going forward, we will only consider rational functions with integer coefficients.
\begin{lemma}\label{redcomp}
Let $f(z)=p(z)/q(z)$ be a reduced difference-degree-1 rational function where $n=\deg p=\deg q+1$ and let $g(z)=u(z)/v(z)$ be a reduced rational function.
    Then $$\frac{p(u/v)v^n}{q(u/v)v^n}$$ is a reduced expression for $f(g(z))$ as the quotient of two polynomials.
\end{lemma}
\begin{proof}
    Write $p(z)=a_0+a_1z+\cdots a_nz^n$ and write $q(z)=b_0+b_1z+\cdots+b_{n-1}z^{n-1}$. Then \begin{align*}
        P:=p(u/v)v^n&=a_0v^n+a_1uv^{n-1}+\cdots+a_{n-1}u^{n-1}v+a_nu^n,\\
        Q:=q(u/v)v^n&=b_0v^n+b_1uv^{n-1}+\cdots+b_{n-1}u^{n-1}v.
    \end{align*}
    We wish to show that the two polynomials $P$ and $Q$ have no roots in common.

    First suppose $r$ is a root of $P$. Then $u(r)/v(r)$ is a root of $p$. We know this by first seeing that if $r$ is a root of $P$ then $r$ cannot be a root of $v$, for if $P(r)=v(r)=0$ then $a_nu(r)^n=0$ and so $r$ is a root of $u$, which is a contradiction. Thus $p(u/v)(r)=0$ and so $u(r)/v(r)$ is a root of $p$.
    
    Now suppose $r$ is a root of $Q$. Then either $u(r)/v(r)$ is a root of $q$ or $r$ is a root of $v$. Thus we can break the property of $r$ simultaneously being a root of $P$ and $Q$ into two cases.

    Case 1: Suppose $u(r)/v(r)$ is a root of $p$ and $q$. This is a contradiction as $p/q$ was assumed to be reduced.

    Case 2: Suppose $u(r)/v(r)$ is a root of $p$ and $r$ is a root of $v$. Then $u/v$ has a pole at $z=r$ because $r$ cannot be a root of $u$. Then $(u/v)(r)=\infty\in\widehat{\C}$, which cannot be a root of $p$ since $p$ is a polynomial, so $p(\infty)=\infty\neq 0$. Thus $P$ and $Q$ cannot have a root in common.
\end{proof}
\begin{rmk}\label{leadingcoefficients}
    In the case $g(z)$ has difference-degree 1, let $u(z)=c_0+c_1z+\cdots c_nz^m$ and $v(z)=d_0+d_1z+\cdots d_{m-1}z^{m-1}$. Then the leading (integer) coefficients of $P$ and $Q$ in the variable $z$ are $a_nc_m^n$ and $b_{n-1}d_{m-1}c_m^{n-1}$, respectively. Thus they are only equal if $a_nc_m=b_{n-1}d_{m-1}$. Specifically, in the case $f(z)=g(z)=H(z)=\ds\frac{(6912-z)^3}{27z^2}$, we have a leading coefficient for $P$ of $1$ and a leading coefficient for $Q$ of $27^2$. In general, the leading coefficient of the numerator of $H^n(z)$ is $(-1)^n$ and that of the denominator is $27^n$.
\end{rmk}

\begin{prop}
Let $f(z)=\ds\frac{p(z)}{q(z)}$ and $g(z)=\ds\frac{u(z)}{v(z)}$ be difference-degree 1 rational functions. Then $\emph{ddeg\,} f(g(z))=1$. Furthermore, $\deg f(g(z))=\deg(f)\deg(g)$.
\end{prop}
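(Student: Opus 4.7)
The plan is to apply Proposition \ref{redcomp} to express $f(g(z))$ in reduced form as $P/Q$ with $P = p(u/v)v^n$ and $Q = q(u/v)v^n$, and then compute $\deg P$ and $\deg Q$ by a straightforward monomial count. Because the proposition guarantees $P/Q$ is already reduced, no hidden common factors can drop the degree, so the degree of $f\circ g$ as a rational function equals $\max\{\deg P,\deg Q\}$.

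First I would fix the notation as in Proposition \ref{redcomp}: set $n = \deg p = \deg q + 1$ and $m = \deg u = \deg v + 1$, and write $p(z) = \sum_{k=0}^n a_k z^k$, $q(z) = \sum_{k=0}^{n-1} b_k z^k$ with $a_n, b_{n-1} \neq 0$, and similarly record the nonzero leading coefficients of $u$ and $v$. Then $P = \sum_{k=0}^n a_k u^k v^{n-k}$, and each summand has degree
$$\deg\bigl(a_k u^k v^{n-k}\bigr) = mk + (m-1)(n-k) = mn - n + k,$$
which is strictly increasing in $k$. So the unique top-degree summand of $P$ is $a_n u^n$, of degree $mn$, with leading coefficient $a_n$ times the $n$-th power of the leading coefficient of $u$. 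This is nonzero, hence no cancellation, and $\deg P = mn$.

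Next I would carry out the same monomial count for $Q = \sum_{k=0}^{n-1} b_k u^k v^{n-k}$. By the same formula the unique top-degree summand is $b_{n-1} u^{n-1} v$, of degree $mn-1$, whose leading coefficient is a product of three nonzero factors, so $\deg Q = mn-1$. Combining, $\text{ddeg}(f\circ g) = mn - (mn-1) = 1$, and
$$\deg(f\circ g) = \max\{\deg P,\deg Q\} = mn = \deg(f)\deg(g),$$
as claimed.

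There is essentially no serious obstacle here once Proposition \ref{redcomp} is in hand; the only point requiring attention is that the candidate leading terms in $P$ and $Q$ cannot be cancelled, which is automatic since leading coefficients are nonzero by definition. The work really is just bookkeeping of exponents.
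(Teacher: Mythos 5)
Your proposal is correct and follows essentially the same route as the paper: invoke Proposition \ref{redcomp} to get the reduced form $p(u/v)v^n \big/ q(u/v)v^n$, then count degrees of the monomials $u^kv^{n-k}$ to find the top terms $a_nu^n$ (degree $mn$) in the numerator and $b_{n-1}u^{n-1}v$ (degree $mn-1$) in the denominator. Your explicit observation that the summand degrees are strictly increasing in $k$, so no cancellation can occur at the top, is a slightly more careful version of the paper's identification of the maximal-degree term, but it is the same argument.
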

\begin{proof}
First note that $\deg(f(g(z))=\deg(f)\deg(g)$ is proven in \cite{B}, pp. 32. By Exercise 2.5 of \cite{B}, the difference-degree of $f(g(z))$ is equal to the \textit{valency} of $f(g(z))$ at $\infty$: that is, the number of solutions to $f(g(z))=f(g(\infty))$ at $\infty$. By the chain rule, the valency of $f(g(z))$ at $\infty$ is 1 \cite{B}, and so $\ddeg f(g)=1$.
\end{proof}
%
%
%
\begin{cor}\label{Hcomp}
    Let $H:\hC\to\hC$ satisfy $H(z)=\ds\frac{(6912-z)^3}{27z^2}$. Let $H^n$ be the $n$-fold composition of $H$ with itself. Then $\deg H^n=3^n$ and $\emph{ddeg\,}H^n=1$.
\end{cor}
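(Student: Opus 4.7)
The plan is to prove both claims simultaneously by induction on $n$, with the previous proposition serving as the engine of the inductive step. The base case $n=1$ is a direct inspection of $H(z) = \frac{(6912-z)^3}{27z^2}$: the numerator has degree $3$, the denominator has degree $2$, the unique root of the numerator is $z = 6912$ while the unique root of the denominator is $z = 0$, so $H$ is a reduced difference-degree-$1$ rational function with $\deg H = 3 = 3^1$ and $\ddeg H = 1$.

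For the inductive step, I assume $H^n$ is a reduced rational function with $\deg H^n = 3^n$ and $\ddeg H^n = 1$, and write $H^{n+1} = H \circ H^n$. Since both $H$ and $H^n$ are difference-degree-$1$ rational functions by the inductive hypothesis, the preceding proposition applies and gives $\ddeg H^{n+1} = 1$ together with
\[
\deg H^{n+1} \;=\; \deg H \cdot \deg H^n \;=\; 3 \cdot 3^n \;=\; 3^{n+1}.
\]
To keep the induction well-founded (so that the preceding proposition may be applied again at the next stage), I would also note, via Proposition \ref{redcomp} applied to $f = H$ and $g = H^n$, that the natural presentation of $H^{n+1}$ obtained by clearing denominators is already reduced; this ensures $H^{n+1}$ retains the hypotheses needed for the next iteration.

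There is essentially no obstacle here: the whole content of the corollary is bookkeeping on top of the preceding proposition. The only point requiring the slightest care is confirming at each stage that the composition remains reduced, so that the degree count from the previous proposition is not undermined by accidental cancellation, and this is handled uniformly by Proposition \ref{redcomp}.
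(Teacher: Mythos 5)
Your proof is correct and matches the paper's (implicit) argument: the corollary is simply the preceding proposition applied inductively to $H$, whose base data ($\deg H=3$, $\ddeg H=1$, numerator and denominator sharing no root) you verify directly. Your extra care in tracking reducedness at each stage via Proposition~\ref{redcomp} is exactly the hypothesis the degree count relies on, so nothing is missing.
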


\begin{prop}\label{mult1}
    Let $F:\hC\to\hC$ be a PCF rational function and $\phi$ be a fixed point of $F^n(z)=P_n(z)/Q_n(z)$, where $P_n$ and $Q_n$ are coprime polynomials. Furthermore let $n$ be the smallest positive integer $i$ for which $F^i(\phi)=\phi$. Then the multiplicity of $\phi$ as a root of $P_n(z)-zQ_n(z)$ is $1$.
\end{prop}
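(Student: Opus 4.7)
The plan is to translate the algebraic multiplicity of $\phi$ as a root of $P_n(z) - zQ_n(z)$ into a dynamical condition on the multiplier $\lambda = (H^n)'(\phi)$, and then appeal to the earlier fact that $J(H) = \hC$.

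First, since $H^n(\phi) = \phi$ is a finite value, $Q_n(\phi) \ne 0$. Hence in a neighborhood of $\phi$, the rational function $H^n(z) - z = (P_n(z) - zQ_n(z))/Q_n(z)$ has a zero of the same multiplicity at $\phi$ as its numerator $P_n(z) - zQ_n(z)$. Setting $F(z) = H^n(z) - z$, one computes $F'(\phi) = (H^n)'(\phi) - 1 = \lambda - 1$. Because $n$ is the \emph{minimal} period of $\phi$, the quantity $\lambda$ is precisely the multiplier of the periodic orbit $\{\phi, H(\phi), \dots, H^{n-1}(\phi)\}$ in the sense of the earlier definition. So $\phi$ is a simple root of $P_n(z) - zQ_n(z)$ if and only if $\lambda \ne 1$, and the proposition reduces to showing that $H$ has no periodic cycle of multiplier exactly $1$.

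For this I would use the earlier proposition establishing that $H$ is PCF with no periodic critical points, so that $J(H) = \hC$ and the Fatou set is empty. Any cycle with multiplier $1$ would be parabolic and hence carry a nonempty parabolic basin as a Fatou component; since $F(H) = \varnothing$, no such cycle can exist. Therefore $\lambda \ne 1$ for every periodic orbit of $H$, which gives simplicity of $\phi$ as a root.

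The only subtlety I anticipate is the edge case $\phi = \infty$, which can only occur when $n = 1$ since $\infty$ is a fixed point of $H$ itself: here $P_n(z) - zQ_n(z)$ does not record $\infty$ as a root literally, but the identical local argument applies in the chart $w = 1/z$, and the multiplier at $\infty$ was already computed in the preceding discussion to equal $-27 \ne 1$. Beyond this, the proof is a short linkage between algebraic multiplicity and the dynamical multiplier, with the PCF analysis of $H$ doing the substantive work.
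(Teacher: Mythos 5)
Your argument is correct and follows the same reduction as the paper: translate multiplicity $\ge 2$ of $\phi$ as a root of $P_n(z)-zQ_n(z)$ into $(H^n)'(\phi)=1$, use minimality of $n$ to identify this derivative with the multiplier of the orbit of $\phi$, and then rule out a cycle of multiplier $1$. The only place you diverge is in that last step: the paper invokes Theorem 9.32 of Beardon, that a rationally indifferent cycle must have a critical point with infinite forward orbit in its immediate basin, and contradicts this with the preperiodicity of the critical points of $H$; you instead observe that a parabolic cycle would produce a nonempty parabolic basin inside the Fatou set, contradicting the earlier Proposition that $J(H)=\hC$, i.e.\ $F(H)=\varnothing$. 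Both closing arguments are valid and rest on the same underlying facts about parabolic cycles; yours has the small advantage of reusing a result already established in the paper rather than citing an additional external theorem, while the paper's citation keeps the argument independent of the Julia-set computation. Your remarks that $Q_n(\phi)\neq 0$ (so the numerator's vanishing order equals that of $H^n(z)-z$) and that $\phi=\infty$ is not literally a root of the polynomial are correct and harmless additions.
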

\begin{proof}
    First we will demonstrate that if the multiplicity of $\phi$ as a root of $P_n(z)-zQ_n(z)$ is greater than 1, then $\left.\ds\frac{\dv}{\dv z}F^n(z)\right|_{z=\phi}=1$. This is simply because if $F^n(\phi)-\phi=0$ and $\left.\ds\frac{\dv}{\dv z}[F^n(z)-z]\right|_{z=\phi}=0$ (by definition of multiplicity), then $\left.\ds\frac{\dv}{\dv z}F^n(z)\right|_{z=\phi}-\left.\ds\frac{\dv}{\dv z}z\right|_{z=\phi}=0$, so $\left.\ds\frac{\dv}{\dv z}F^n(z)\right|_{z=\phi}=1$.

    Now note that $\ds\frac{\dv}{\dv z}F^n(z)=\ds\prod_{i=0}^{n-1}F'(F^i(z))$ by the chain rule. So in the case of the fixed point $\phi$, $$\left.\frac{\dv}{\dv z}F^n(z)\right|_{z=\phi}=\prod_{i=0}^{n-1}F'(F^i(\phi)).$$ Because $n$ is the smallest integer for which $F^n(\phi)=\phi$, the set $\{F^i(\phi)\}_{i=0}^{n-1}$ is the orbit of $\phi$. Therefore the product above is the multiplier for the cycle of $\phi$. Now still supposing the multiplicity of $\phi$ as a root of $P_n(z)-zQ_n(z)$ is greater than 1, this places $\phi$ in a rationally indifferent cycle of $F$.
    
    
   This contradicts Corollary 14.5 of \cite{M}, which says that since $F$ is PCF, every cycle of $F$ is either superattracting or repelling; there are no rationally indifferent cycles of $F$. Hence the multiplicity of $\phi$ as a root of $P_n(z)-zQ_n(z)$ is 1.
\end{proof}
\begin{rmk}\label{dist}
For $F=H$ from Corollary \ref{Hcomp}, letting $P_n(z)/Q_n(z)$ be a reduced expression of $H^n$ with $P_n,Q_n$ polynomials, we know that the leading terms of $P_n$ and $Q_n$ are never equal, since by Remark \ref{leadingcoefficients}, the leading term of $P_n$ is $(-1)^{n}$ and the leading term of $Q_n$ is $27^n$. And because $\deg H^n=3^n$ by Corollary \ref{Hcomp}, $P_n(z)-zQ_n(z)$ is a degree $3^n$ polynomial. Now that we know that all roots of $P_n(z)-zQ_n(z)$ have multiplicity 1 by Proposition \ref{mult1}, we may conclude that the polynomial must have $3^n$ distinct roots in $\C$.
\end{rmk}
\begin{lemma}\label{rec}
    Let $B_n$ be the number of orbits of size $n$ in $\C$ of the function $H$. Then the sequence $\{B_n\}_{n\in\Z_+}$ can be given recursively as $$B_n=\frac{3^n-\sum_{i|n,i<n}iB_i}{n},$$ with $B_1=3$.
\end{lemma}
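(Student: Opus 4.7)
The plan is to count the finite fixed points of $H^n$ in two ways and then solve for $B_n$.

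First I would invoke Remark \ref{dist}: the polynomial $P_n(z) - zQ_n(z)$ has degree $3^n$ and has $3^n$ distinct roots in $\C$. By construction, the roots of $P_n(z) - zQ_n(z)$ are exactly the finite solutions to $H^n(z) = z$, i.e., the finite fixed points of $H^n$. (The point $\infty \in \hC$ is also fixed by $H$, and hence by $H^n$, but it is not a root of the polynomial; this is exactly what we want, since the lemma counts only orbits contained in $\C$.)

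Second, I would partition the set of finite fixed points of $H^n$ by their minimal period under $H$. A point $z \in \C$ satisfies $H^n(z) = z$ precisely when the minimal period $i$ of $z$ divides $n$, and each orbit of minimal size $i$ contributes exactly $i$ such fixed points. Summing over the divisors of $n$ yields the count
$$\sum_{i \mid n} i B_i = 3^n.$$
Isolating the $i = n$ summand gives $nB_n = 3^n - \sum_{i \mid n,\, i < n} i B_i$, which is the claimed recursion. Applying this at $n = 1$ (with empty sum equal to $0$) yields $B_1 = 3$, consistent with the three finite fixed points of $H$ listed earlier in the paper.

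I do not anticipate a substantive obstacle here; the main subtlety is simply to make sure the point $\infty$ is handled correctly. This is automatic from the polynomial formulation, since $P_n(z) - zQ_n(z)$ is a genuine polynomial of degree $3^n$ and so counts only finite roots. The root count itself $3^n$ (with multiplicity one) is supplied by Remark \ref{dist}, which in turn relies on Proposition \ref{mult1} and Corollary \ref{Hcomp}; both are already established by this point in the paper.
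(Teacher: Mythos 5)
Your proposal is correct and follows essentially the same route as the paper: use Remark \ref{dist} to count the $3^n$ distinct finite fixed points of $H^n$, then stratify them by minimal period (equivalently, subtract the $iB_i$ points of each proper period $i\mid n$) and divide by $n$. Your formulation via $\sum_{i\mid n} iB_i = 3^n$ is just a rearrangement of the paper's subtraction argument, and your handling of $\infty$ matches the paper's remark that $\infty$ is a fixed point not lying in $\C$.
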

\begin{proof}
First note that an orbit of size 1 is a fixed point. The three fixed points of $H$ in $\C$ are 1728, $\frac{3456}{7}(-1-3i\sqrt{3})$, and $\frac{3456}{7}(-1+3i\sqrt{3})$. Note that $\infty\in\hC$ is also a fixed point of $H$, but $\infty\notin\C$, so $B_1=3$.

    Now let us consider the number of fixed points of the function $H^n(z)$ for a given $n\in\N$. Let $\phi$ be a fixed point of $H^n(z)$, so $P_n(\phi)/Q_n(\phi)=\phi$. By Remark \ref{dist}, this polynomial has $3^n$ distinct roots, each with multiplicity 1. So there are $3^n$ fixed points of $H^n(z)$.

    Of these fixed points, we are only interested in those ones $\phi$ such that $H^i(\phi)\neq \phi$ for any $i<n$. Let $\alpha$ be a fixed point of $H^n$ and $H^i$, and not of $H^\ell$ for $\ell<i$ (in other words, $\alpha$ is a point of order $i$). Then $\alpha$ is in an orbit of size $i$, of which there are $B_i$ by definition. Then $\alpha$ is one of the $iB_i$ points of order $i$. If $i<n$, then $i|n$. So we remove from the count of $3^n$ fixed points of $H^n$ the $iB_i$ points of order $i$ for each $i<n$, we get $\ds3^n-\sum_{i|n,i<n}iB_i$ points of order $n$. Dividing this by $n$ gives us the desired number of orbits of size $n$.
\end{proof}
\begin{defn}
    The \textbf{M\"obius function} $\mu:\N\to\{-1,0,1\}$ is defined as $$\mu(n)=\begin{cases}
    -1&\text{$n$ is square-free and has an odd number of prime factors,}\\
    1&\text{$n$ is square-free and has an even number of prime factors,}\\
    0&\text{$n$ is not square-free.}
\end{cases}$$
\end{defn}
    \begin{lemma}\label{mif}
    Let $f$ and $g$ be arithmetical functions and let $n\in\N$. Then $$f(n)=\sum_{d|n}g(d)\text{ if and only if } g(n)=\sum_{d|n}f(d)\mu(n/d).$$ This is known as the \textbf{M\"obius inversion formula}. 
    \end{lemma}
    \begin{proof}
    This is Theorem 2.9 in \cite{A}.
    \end{proof}
\begin{thm}\label{thm2}
    Let $B_n$ be the number of orbits of size $n$ of the function $H$. Then the sequence $\{B_n\}_{n\in\N}$ can be given in closed form as $$B_n=\frac{\sum_{d|n}\mu(d)3^{n/d}}{n}.$$
    \end{thm}
    \begin{rmk}
    This corresponds to sequence \href{https://oeis.org/A027376}{\emph{A027376}} in the \emph{OEIS}. Indeed, this is similar to computation of the number of Hesse loops of length of even length provided in Theorem 22 of \cite{DHH}, which corresponds to the same sequence. Our argument applies more generally to PCF functions of degree 3 and difference-degree 1.
\end{rmk}
\begin{proof}
Recall that $$B_n=\frac{3^n-\sum_{i|n,i<n}iB_i}{n},$$ and so $$3^n=\sum_{i|n}iB_i.$$ Then a straightforward application of the M\"obius inversion formula says that $$nB_n=\sum_{d|n}3^d\mu(n/d)$$ and so $$B_n=\frac{\sum_{d|n}3^d\mu(n/d)}{n}=\frac{\sum_{d|n}\mu(d)3^{n/d}}{n}.$$


\end{proof}
\section{Periodicity under \sectionh}
\subsection{The Hesse pencil}
\begin{defn}
Let $E=V(f)$ be an elliptic curve. The \textbf{Hesse pencil} induced by $E$ is defined as the family of curves $V(tf+u\mathcal{H}(f))$ for $(t,u)\in\P^1$.
\end{defn}
Consider the Hesse pencil $\Pcal$ induced by the curve $E_{0}=V(x^3+y^3+z^3)$. Then $\Pcal$ comprises cubic curves of the form $E_{t}=V(x^3+y^3+z^3-3txyz)$ for $t\in\C$, in addition to $E_\infty=V(xyz)$. We will henceforth refer to $\Pcal$ as the \textbf{standard Hesse pencil}. By using the functions \begin{align*}
a(t)&=-27t(t^3+8),\\
b(t)&=54(t^6-20t^3-8),
\end{align*}
it is known that $E_t\cong V(y^2z-x^3-a(t)xz^2-b(t)z^3)$ \cite{AD}, and thereby we can calculate the $j$-invariant of $E_t$ as \begin{align}\label{form}j(E_t)=1728\frac{4a(t)^3}{4a(t)^3+27b(t)^2}.\end{align}
It is also known that $\haw E_t=E_s$ where $s=\ds\frac{4-t^3}{3t^2}$ \mbox{\cite{CO}}.


We will begin to investigate the behaviors of elliptic curves with periodic $j$-invariants under the iterated Hesse derivative. We will start by looking at specific examples in the standard Hesse pencil.
The following four examples will investigate the behavior of elliptic curves with $j$-invariants of orders 1 and 2 under $H$. These are the same nine $j$-invariants listed in Example \ref{order2}, without the $j=\infty$ case, in which $E$ is not an elliptic curve and $\haw E=E$.
\begin{ex}\label{realfixed} Consider the standard Hesse pencil $\Pcal$. This pencil has six fibers with $j$-invariant equal to $1728$: those with $t$ satisfying the polynomial equation $$t^6-20t^3-8=0.$$ Let $E$ be any of these six fibers. Then $\haw^2 E=E$.
\end{ex}

The previous is an example of what is called a \textbf{harmonic} cubic, which are well documented in for example Proposition 4.13 of \cite{CO}. The following three examples can be verified using a computer algebra system such as Macaulay2.
\begin{ex}\label{nonrealfixed}
Consider the standard Hesse pencil $\Pcal$. Let $E\in\Pcal$ have a $j$-invariant $j(E)=\ds\frac{3456}{7}\left(-1\pm3i\sqrt{3}\right)$. Then $j(E)$ is fixed under $H$, and $\haw^3E=E$.

Note here that for $j(E_t)=\ds\frac{3456}{7}\left(-1\pm3i\sqrt{3}\right)$, $t$ must solve the equation in Line (\mbox{\ref{form}}) $$\frac{6912a(t)^3}{4a(t)^3+27b(t)^2}=\frac{3456}{7}\left(-1\pm3i\sqrt{3}\right).$$ Simplifying this equation ultimately yields the polynomial $$a(t)^6+9a(t)^3b(t)^2+27b(t)^4=0.$$ By defining $\normalfont{\texttt{t}}$ to satisfy this polynomial in a field extension or quotient ring in Macaulay2 \cite{M2}, we can then compute $\mathcal{H}^3(x^3+y^3+z^3-3t xyz)$ as $\normalfont{\texttt{h3}}$ and observe that \begin{center}\normalfont{\texttt{coefficient(x\^{}3,h3)*(-3*t)==coefficient(x*y*z,h3)}}\end{center} yields \normalfont{\texttt{true}}, as required. 
\end{ex}
The remaining examples can be verified similarly.
\begin{ex}
Consider the standard Hesse pencil $\Pcal$. Let $E\in\Pcal$ have a $j$-invariant of $3456\left(5\pm3\sqrt{3}\right)$. Then $j(E)$ is periodic under $H$ of order $2$, and $\haw^4E=E$, and $\haw^2E\neq E$.
\end{ex}
\begin{ex}
Consider the standard Hesse pencil $\Pcal$. Let $E\in\Pcal$ have a $j$-invariant satisfying the polynomial equation $$13j(E)^4-89856j(E)^3+4586471424j(E)^2-5283615080448j(E)+2282521714753536=0.$$ Then $j(E)$ is periodic under $H$ of order $2$, and $\haw^6E=E$, and $\haw^2 E\neq E$. Note that the normalized defining polynomial for the number field defined by the above polynomial is $x^4 - x^3 - x^2 + x + 1$ \cite{lmfdb}.
\end{ex}


\subsection{The dynamics of the Hesse derivative}
We will now investigate the behavior of elliptic curves with periodic $j$-invariants in general. We will require two lemmas and the following definition.
\begin{defn}
    Let $C=V(f)\subseq\P^2$ be a curve of degree $d$ and let $Q=(Q_0,Q_1,Q_2)\in\P^2$ be any point. Then the \textbf{polar} (or \textbf{first polar}) of $C$ with respect to $Q$ is $$\mathfrak{P}_Q(C)=V(Q_0\cdot f_x+Q_1\cdot f_y+Q_2\cdot f_z).$$ It is the unique degree $d-1$ curve whose intersection with $C$ (counting with multiplicity) is exactly the points of $C$ whose tangent lines contain $Q$. Furthermore, for $C$ of degree at least $3$, the \textbf{polar conic} of $C$ with respect to $Q$, denoted $\mathfrak{P}^{(2)}_Q(C)$, is the result of iterating the polar curve with respect to $Q$ until one obtains a degree $2$ curve.
\end{defn}
 
\begin{lemma}
    Given a curve $C\subseq\P^2$ of degree $d$ and any $Q\in\P^2$ and any $A\in\emph{Aut}(\P^2)$, one has $$A(\mathfrak{P}_Q(C))=\mathfrak{P}_{AQ}(AC).$$
\end{lemma}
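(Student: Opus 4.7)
The plan is a direct chain-rule computation at the level of defining polynomials. I would fix a matrix representative $M \in \GL_3(\C)$ for $A \in \Aut(\P^2) = \PGL_3(\C)$, so that $A$ acts on column vectors by $x \mapsto Mx$. Writing $C = V(f)$, the transformed curve is $AC = V(f \circ M^{-1})$, since $y \in AC$ iff $M^{-1} y \in C$.

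First I would identify the defining polynomial of $A(\mathfrak{P}_Q(C))$. By definition $\mathfrak{P}_Q(C) = V(h)$ where $h(x) = Q \cdot \nabla f(x)$, so the same principle gives $A(\mathfrak{P}_Q(C)) = V(h \circ M^{-1})$, cut out by the polynomial $x \mapsto Q \cdot \nabla f(M^{-1} x)$.

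Next I would compute the defining polynomial of $\mathfrak{P}_{AQ}(AC)$ by the chain rule. Setting $g(x) = f(M^{-1} x)$, one has $\nabla g(x) = (M^{-1})^T \nabla f(M^{-1} x)$, so the polar polynomial for the point $AQ = MQ$ evaluates as
\[
(MQ) \cdot \nabla g(x) = (MQ)^T (M^{-1})^T \nabla f(M^{-1} x) = Q^T \nabla f(M^{-1} x) = Q \cdot \nabla f(M^{-1} x),
\]
using $M^T (M^{-1})^T = I$ in the middle. This matches the polynomial from the previous step, so the two curves coincide.

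The computation is short; the only real obstacle is bookkeeping. One must fix once and for all whether $A$ acts on points or on polynomials and track the resulting contravariance consistently. The clean cancellation $(MQ)^T (M^{-1})^T = Q^T$ is exactly the statement that the polar pairing between a point and a gradient is $\GL_3$-equivariant, and no structural input from earlier in the paper is needed.
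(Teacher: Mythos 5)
Your proof is correct, but it takes a different route from the paper's. You argue at the level of defining polynomials: writing $A$ as a matrix $M$, identifying $AC=V(f\circ M^{-1})$, and using the chain rule $\nabla(f\circ M^{-1})(x)=(M^{-1})^{T}\nabla f(M^{-1}x)$ so that the polar polynomial of $AC$ at $MQ$ collapses via $(MQ)^{T}(M^{-1})^{T}=Q^{T}$ to the polynomial cutting out $A(\mathfrak{P}_Q(C))$. The paper instead argues synthetically: it invokes the characterization of $\mathfrak{P}_Q(C)$ as the unique curve of degree $d-1$ whose intersection cycle with $C$ consists of the points of $C$ whose tangent lines pass through $Q$, notes that automorphisms of $\P^2$ preserve incidence, tangency, and intersection multiplicities, and concludes by uniqueness that $A(\mathfrak{P}_Q(C))=\mathfrak{P}_{AQ}(AC)$. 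Your computation is more elementary and self-contained: it needs only the algebraic definition $\mathfrak{P}_Q(C)=V(Q\cdot\nabla f)$ and is insensitive to the geometric subtleties (multiplicities, points of $C$ lying on lines through $Q$, etc.) that the uniqueness characterization quietly absorbs; it also makes the equivariance of the pairing between points and gradients explicit. The paper's argument, on the other hand, explains \emph{why} the identity holds geometrically and matches the way the polar is used later (via the polar conic and reducibility in Lemma \ref{polarcommute}). One small point worth noting in your write-up: the matrix representative $M$ of $A\in\PGL_3(\C)$ is only defined up to a nonzero scalar, but rescaling $M$ rescales both defining polynomials by nonzero constants, so the curves $V(\cdot)$ are unaffected and your argument goes through unchanged.
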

\begin{proof}
Note again that $\Pk_Q(C)$ is the unique degree $d-1$ curve whose intersection with $C$ is exactly the points of $C$ whose tangent lines contain $Q$, counting with multiplicity. In other words, $$\Pk_Q(C).C=P_1+P_2+\cdots+P_{d(d-1)}$$ where $$\{Q\}=\bigcap_{i=1}^{d(d-1)}T_{P_i}(C).$$ Note that in some cases, like if $Q\in C$, we can have $P_i=P_j$ for $i\neq j$.

Since automorphisms preserve incidence and linearity, we have $$A(\Pk_Q(C)).AC=AP_1+AP_2+\cdots+AP_{d(d-1)}$$ where $$\{AQ\}=\bigcap_{i=1}^{d(d-1)}T_{AP_i}(AC).$$ Thus $A(\Pk_Q(C))$ is a degree $d-1$ curve whose intersection with $AC$ is exactly the points whose tangent lines contain $AQ$. Since $\Pk_{AQ}(AC)$ is the unique curve with such properties, we must have $A(\Pk_Q(C))=\Pk_{AQ}(AC)$.
\end{proof}
\begin{lemma}\label{polarcommute}
    Given a curve $C\subseq\P^2$ and an automorphism $A\in\emph{Aut}(\P^2)$, we have $\haw AC=A\haw C$.
\end{lemma}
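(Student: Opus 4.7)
The plan is to represent $A \in \Aut(\P^2) = \PGL_3(\C)$ by a lift to an invertible matrix $M \in \GL_3(\C)$ and to track how the Hessian matrix transforms under the corresponding linear change of coordinates. Writing $C = V(f)$, the image satisfies $AC = V(g)$ where $g(\mathbf{x}) \defeq f(M^{-1}\mathbf{x})$ with $\mathbf{x} = (x,y,z)^T$. The claim $\haw(AC) = A\haw C$ then reduces to comparing the Hessian determinant of $g$ with that of $f$.

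The key computation is that two applications of the chain rule yield
\[
\Hess(g)(\mathbf{x}) = (M^{-1})^T \, \Hess(f)(M^{-1}\mathbf{x}) \, M^{-1},
\]
so, taking determinants,
\[
\H(g)(\mathbf{x}) = \det(M)^{-2} \cdot \H(f)(M^{-1}\mathbf{x}).
\]
Because $\det(M)^{-2}$ is a nonzero scalar, it contributes nothing to the vanishing locus, and therefore
\[
\haw(AC) = V(\H(g)) = V\bigl(\H(f)\circ M^{-1}\bigr) = A\,V(\H(f)) = A\haw C,
\]
which is exactly what is claimed.

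Two minor subtleties merit mention. First, the lift $M$ of $A$ is only well-defined up to a scalar $\lambda \in \C^\times$; but such a rescaling merely multiplies $g$ and $\H(g)$ by nonzero constants, leaving their projective zero sets unchanged. Second, the chain-rule identity for $\Hess(g)$ requires some care with transposes, since $M^{-1}$ acts on both sides of the inner Hessian matrix; I would carry out this calculation in explicit index notation, writing $g(\mathbf{x}) = f(N\mathbf{x})$ with $N = M^{-1}$ and computing $\partial_i\partial_k g = \sum_{j,\ell} N_{ji}\,(\partial_j\partial_\ell f)(N\mathbf{x})\,N_{\ell k}$, before reading off the matrix identity above. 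Neither point is a serious obstacle, so the proof is essentially bookkeeping around the chain rule.
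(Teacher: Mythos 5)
Your proof is correct, but it takes a genuinely different route from the paper. You argue by brute linear algebra: lifting $A$ to $M\in\GL_3(\C)$, setting $g=f\circ M^{-1}$, and using the chain-rule identity $\Hess(g)(\mathbf{x})=(M^{-1})^T\,\Hess(f)(M^{-1}\mathbf{x})\,M^{-1}$, whence $\H(g)=\det(M)^{-2}\,\bigl(\H(f)\circ M^{-1}\bigr)$ and the zero loci match; your index computation and your handling of the scalar ambiguity in the lift are both sound. The paper instead argues geometrically: it first proves that first polars transform equivariantly, $A(\Pk_Q(C))=\Pk_{AQ}(AC)$, via the characterization of $\Pk_Q(C)$ as the unique degree $d-1$ curve cutting out the points of $C$ whose tangents pass through $Q$, and then invokes the classical description $\haw C=\{Q:\Pk^{(2)}_Q(C)\text{ is reducible}\}$ to conclude that $A$ carries $\haw C$ onto $\haw AC$. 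Your computation is more self-contained and in fact yields slightly more: the explicit covariance law $\H(f\circ M^{-1})=\det(M)^{-2}(\H(f)\circ M^{-1})$ gives equality of the defining polynomials up to a nonzero scalar, hence equality of the Hessian curves with multiplicities, not merely as point sets, and it does not rely on the cited characterization of the Hessian via polar conics. The paper's route is coordinate-free and purchases the polar-equivariance lemma along the way, which fits its broader discussion of polars, but as a proof of this particular lemma your approach is the more economical one.
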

\begin{proof}
    Note that $\haw C$ satisfies $$\haw C=\{Q\in\P^2:\Pk^{(2)}_Q(C)\text{ is reducible}\},$$ see Section 1.1.4 of \cite{D}.
    By the previous lemma, $A(\Pk_Q(C))=\Pk_{AQ}(AC)$, so automorphisms commute with finding the polar curve. Thus we have $A(\Pk^{(2)}_Q(C))=\Pk^{(2)}_{AQ}(AC)$. So if $Q\in\haw C$, then $\Pk^{(2)}_Q(C)$ is reducible and so $\Pk^{(2)}_{AQ}(AC)=A(\Pk^{(2)}_Q(C))$ is reducible. Thus $AQ\in\haw AC$, so $A\haw C\subseq \haw AC$. A similar argument works for the reverse containment. Thus we have equality.
\end{proof}

We will also need to understand the automorphism group on the standard Hesse pencil $\Pcal=V(x^3+y^3+z^3-3txyz)$, known as the \textbf{Hessian group} and denoted $G_{216}$ \cite{AD}.

Consider the homomorphism $G_{216}\to\Aut(\P^1)$. The image of this homomorphism is isomorphic to $A_4$, which induces the tetrahedral action on $\P^1$, yielding two orbits of size 4, one orbit of size 6, and every other point of $\P^1$ in an orbit of size 12, see \cite{AD}. Furthermore, we can write $$G_{216}=\Gamma\rtimes\Delta,$$ where $\Gamma\cong(\Z/3\Z)^2$ preserves each fiber of $\Pcal$ (that is, $g(E_t)=E_t$ for all $t\in\P^1$ and for all $g\in\Gamma$), and $\Delta\cong\text{SL}_2(\F_3)$ acts on $\Gamma$ by the natural linear representation. The subgroup $\Delta$ can be generated as \begin{align}\label{Delta}\Delta=\left\langle\alpha=\begin{pmatrix}1&1&1\\1&\zeta_3&\zeta_3^2\\1&\zeta_3^2&\zeta_3\\\end{pmatrix},\beta=\begin{pmatrix}1&0&0\\0&\zeta_3&0\\0&0&\zeta_3\\\end{pmatrix}\right\rangle\leq\Aut(\P^2)=\text{PGL}(3),\end{align} where $\zeta_3$ is a primitive third root of unity \mbox{\cite{AD}}. Note that
 $\alpha^2=\begin{pmatrix}1&0&0\\0&0&1\\0&1&0\\\end{pmatrix}\in\text{Stab}(t)$ for all $t\in\P^1$. Note that $\Delta/\langle\alpha^2\rangle\cong A_4$ and $\Gamma\rtimes\langle\alpha^2\rangle=\ker(G_{216}\to\Aut(\P^1))$.

Since up to change of projective coordinates any Hesse pencil $\Pcal(E)$ can be brought to the standard Hesse pencil, we have $\Aut(\Pcal(E))\cong(\Z/3\Z)^2\rtimes\text{SL}_2(\F_3)$, where $\text{im}(\Aut(\Pcal(E))\to\Aut(\P^1))\cong A_4$ induces the tetrahedral action on $\P^1$, and $\ker(\Aut(\Pcal(E))\to\Aut(\P^1))\cong C_3^2\rtimes C_2$, so there are 12 automorphisms of $\Pcal(E)$ that do not fix all fibers.

For a general elliptic curve $E$, let us denote by $\Gamma(E)$ and $\Delta(E)$ the subgroups of $\Aut(\Pcal(E))$ such that $\Gamma(E)\cong C_3^2$, $\Delta(E)\cong\text{SL}_2(\F_3)$, and $\Aut(\Pcal(E))=\Gamma(E)\rtimes\Delta(E)$.

\begin{thm}\label{ffo}
Let $E$ be an elliptic curve and let $j(E)\in\C$ be a periodic point of $H$ of orbit size $p$. Then there exists an $n\in\N$ such that $\haw^nE=E$. Furthermore, this $n$ is at most $3p$.
\end{thm}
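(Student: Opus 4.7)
The plan is to work inside the Hesse pencil $\Pcal(E)$ and lift the dynamics from the $j$-line down to the base $\P^1$ of the pencil, where $\haw$ is a genuine self-map rather than an operation only defined up to isomorphism. By Lemma~\ref{polarcommute}, after a projective change of coordinates I may assume $\Pcal(E)$ is the standard Hesse pencil $\{E_t\}_{t\in\P^1}$, with $E=E_{t_0}$ for some $t_0\in\P^1$, and $\haw E_t=E_{\eta(t)}$ for $\eta(t)=(4-t^3)/(3t^2)$. Writing $\tau\colon\P^1\to\hC$ for the $j$-invariant map $\tau(t)=j(E_t)$, Theorem~1 supplies the semi-conjugacy $\tau\circ\eta=H\circ\tau$.

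Next I bring in the tetrahedral action on the base. The subgroup $\Delta\subset G_{216}$ of equation~(\ref{Delta}) acts on $\P^2$ preserving $\Pcal$, and its image in $\Aut(\P^1)$ is isomorphic to $A_4$; by Lemma~\ref{polarcommute} each element of $\Delta$ commutes with $\haw$, so the induced $A_4$-action on the base commutes with $\eta$. Moreover, $\tau$ is $A_4$-invariant, since projectively equivalent cubics share a $j$-invariant, and a direct computation with $a(t)=-27t(t^3+8)$ and $b(t)=54(t^6-20t^3-8)$ shows that $\tau$ has degree exactly $12=|A_4|$ as a rational map. Consequently $\tau$ is the quotient map $\P^1\to\P^1/A_4\cong\hC$, so $\tau(s)=\tau(t)$ if and only if $s$ and $t$ lie in the same $A_4$-orbit.

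Suppose now $H^m(j(E))=j(E)$ for some $m\in\N$. The semi-conjugacy gives $\tau(\eta^m(t_0))=\tau(t_0)$, hence $\eta^m(t_0)=\sigma(t_0)$ for some $\sigma\in A_4$. Because $\eta$ commutes with $\sigma$, induction yields $\eta^{km}(t_0)=\sigma^k(t_0)$ for all $k\geq 1$; choosing $k=\ord(\sigma)$, finite since $A_4$ is, gives $\eta^{km}(t_0)=t_0$, i.e.\ $\haw^{km}E=E_{t_0}=E$, so $n=km$ suffices. The main obstacle is the identification of $\tau$ with the $A_4$-quotient map, since this is what converts a coincidence in $j$-invariant (``isomorphism'') into a coincidence of actual fibres of the pencil (``equality''); once that is in hand, the conclusion reduces to the soft finiteness argument above using commutation of $\eta$ with $A_4$ on the base.
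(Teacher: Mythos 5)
Your argument is correct, but it takes a genuinely different route from the paper's. The paper stays at the level of fibers of $\Pcal(E)$: it considers the set $O(E_{t_0})$ of at most $12$ fibers isomorphic to $E$, uses Lemma \ref{polarcommute} plus an orbit-stabilizer computation in $\Aut(\Pcal(E))$ (identifying $\text{Stab}(t_k)$ with $\Gamma(E)\rtimes\langle A^2\rangle$) to show $\haw^p$ is injective on $O(E_{t_0})$, hence a permutation of a finite set, and takes a power of that permutation; the small-orbit cases $j\in\{0,1728,\infty\}$ are split off and checked computationally. You instead descend to the base $\P^1$ of the pencil: you use the explicit map $\eta(t)=(4-t^3)/(3t^2)$, the semi-conjugacy $\tau\circ\eta=H\circ\tau$, and the identification of the $A_4$-invariant, degree-$12$ map $\tau(t)=j(E_t)$ with the quotient $\P^1\to\P^1/A_4$, so that $H^m(j(E))=j(E)$ forces $\eta^m(t_0)=\sigma(t_0)$ for some $\sigma\in A_4$, and commutation of $\eta$ with $\sigma$ (again from Lemma \ref{polarcommute}) gives $\eta^{m\,\ord(\sigma)}(t_0)=t_0$. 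Your route buys two things: it treats the exceptional $j$-values uniformly with no case division, and it is effective, giving $n\le 3p$ (elements of $A_4$ have order at most $3$), which is consistent with Examples \ref{realfixed} and \ref{nonrealfixed} and bears directly on Question 1. The paper's route, in exchange, never needs the explicit formula for $\eta$ nor the identification of $\tau$ with the $A_4$-quotient. Two steps you assert should be spelled out, though both are routine: that $\tau$ is reduced of degree exactly $12$ (the leading terms of $4a(t)^3$ and $27b(t)^2$ cancel, so one must check the numerator $6912\,a(t)^3$ shares no root with the discriminant), and that the commutation $\haw\circ A=A\circ\haw$ on curves descends to $\eta\circ\sigma=\sigma\circ\eta$ on parameters, which uses the fact that distinct $t\in\P^1$ give distinct members of the pencil.
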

\begin{proof}
Consider the Hesse pencil $\Pcal(E)$ induced by $E$. The pencil $\Pcal(E)$ is an elliptic fibration parametrized by $t\in\P^1$. Let us denote the fibers of $\Pcal(E)$ as $E_t$ and let us denote $E_{t_0}=E$. By Corollary 2.2 of \cite{ATT}, $\haw^nE_{t_0}$ is in $\mathcal{P}(E)$ for every $n\in\N$ (i.e., all elliptic curves in the Hesse pencil share the same 3-torsion points). It is also known (shown, for example, in the proof of Proposition 9.5 of \cite{ATT}) that a Hesse pencil contains at most 12 fibers isomorphic to $E_{t_0}$ (including $E_{t_0}$ itself). Let $O(E_{t_0})$ be the set of the fibers of the Hesse pencil that are isomorphic to $E_{t_0}$, hence $\#O(E_{t_0})\leq 12$.

 Now let $j_{t_0}:=j(E_{t_0})$ be periodic of order $p$ under $H$, so $H^p(j_{t_0})=j_{t_0}$ and $H^{p'}(j_{t_0})\neq j_{t_0}$ for any $p'<p$. Then $\haw^p$ acts on $O(E_{t_0})$. Now we would like to show that the action of $\haw^p$ on $O(E_{t_0})$ is in fact a permutation.
 
 
 Consider the automorphism group $\Aut(\Pcal(E))=\Gamma(E)\rtimes \Delta(E)$, where $\Delta(E)=\langle A,B\rangle\cong\langle\alpha,\beta\rangle$ as in Line (\ref{Delta}), for $A,B\in\Aut(\P^2)$. Specifically, where $\ker(\Aut(\Pcal(E))\to\Aut(\P^1))=\Gamma(E)\rtimes\langle A^2\rangle$.
 
 For the cases that $t_0\in\P^1$ is in an orbit of size 4 or 6 under the image of $\Aut(\Pcal(E))\to\Aut(\P^1)$, we have $j(t_0)\in\{0,\infty,1728\}$, for which the statement of the theorem is easy to verify computationally.
 
 Now let $t_0\in\P^1$ be in an orbit of size $12$ under the image of $\Aut(\Pcal(E))\to\Aut(\P^1)$, and so $\#O(E_{t_0})=12$. Now we can write $O(E_{t_0})=\{E_{t_0},g_1(E_{t_0}),\dots,g_{11}(E_{t_0})\}$ for $g_i\in \Aut(\Pcal(E_{t_0}))$, where $g_i(E_{t_0})\neq g_j(E_{t_0})$ for $i\neq j$. We want to show that the function $\haw^p|_{O(E_{t_0})}$ is injective. 
 
 Suppose $\haw^pg_i(E_{t_0})=\haw^pg_j(E_{t_0})$. Then by Lemma \ref{polarcommute}, $g_i(\haw^pE_{t_0})=g_j(\haw^pE_{t_0})$. Denoting $\haw^p E_{t_0}$ as $E_{t_k}$, we then have $g_i(E_{t_k})=g_j(E_{t_k})$, so $g_ig_j\inv\in\text{Stab}(t_k)$. By the orbit-stabilizer theorem, $\#\text{Stab}(t_k)=18$, so $\text{Stab}(t_k)=\Gamma(E)\rtimes\langle A^2\rangle$. Thus $g_ig_j\inv\in\Gamma(E)\rtimes\langle A^2\rangle=\text{Stab}(t_0)$, and so we have $g_i(E_{t_0})=g_j(E_{t_0})$. Therefore $\haw^p|_{O(E_{t_0})}$ is injective. By the pigeonhole principle, $\haw^p|_{O(E_{t_0})}$ must also be surjective and therefore a permutation on $O(E_{t_0})$. Since permutations on finite sets have finite order, we know that there exists an $n\in\N$ such that $\haw^n|_{O(E_{t_0})}=\Id_{O(E_{t_0})}$, and so $E_{t_0}$ must have a finite $\haw$-orbit.
 
  Note that, by the $\Delta(E)/\langle A^2\rangle\cong A_4$-action on the fibers, the size of the $\haw$-orbit of each elliptic curve is at most three times the $H$-orbit of the $j$-invariant associated to that curve. Suppose $\haw^p E=g(E)$ for some $g\in \text{Aut}(\Pcal(E))$. Then the image of $g$ in the projection to $\Delta(E)/\langle A^2\rangle$ must have order $1$, $2$, or $3$. Since $\haw^p$ commutes with $g$, the order of $E$ under $\haw^p$ must be the same.
 \end{proof}
  

 Theorem \ref{ffo} demonstrates the existence of an $n\in\N$ such that $\haw^nE=E$ for an elliptic curve with periodic $j$-invariant of order $p$, but does not describe how to find the minimal positive such $n$. Indeed, Example \ref{realfixed} and Example \ref{nonrealfixed} both describe the behavior of elliptic curves with fixed $j$-invariants under $H$, but in one example the order of $E$ under $\haw$ is 2 and in the other the order is 3. This leads us to the following question:
 
 \begin{qstn}
 Let $E$ be an elliptic curve and let $j(E)$ be a point of order $p$ under $H$. How may we compute the smallest positive $n$ such that $\haw^nE=E$?
 \end{qstn}
 Following the same technique as that outlined in Example \ref{nonrealfixed}, we have collected the following tables of data on elliptic curves with periodic $j$-invariants. Specifically, we construct an elliptic curve of the form $x^3+y^3+z^3-3txyz$ given a minimal polynomial of the $j$-invariant by using the formulae $a(t)$ and $b(t)$ to construct a minimal polynomial for $t$. We note that since two isomorphic elliptic curves are equal up to change of $\P^2_\C$-coordinates (follows, e.g., from Proposition III.3.1 of \cite{S}) and $\haw$ commutes with automorphisms of the plane, the specific choice of $E$ for each $j$-invariant does not matter. Table \ref{orb123} is organized by the minimal polynomial $m(X)$ over $\Q$ of $j(E)/6912$ instead of $j(E)$, as this quantity yields much more manageably-sized coefficients to present in the table. In Table \ref{orb456}, we have substituted the minimal polynomials for their degrees to save space. The minimal polynomials are found as factors of the numerator of $H^n(X)-X$ for $1\leq n\leq 6$. For some particularly notable $j$-invariants-- that is, those for which $\#\text{Orb}_H(j(E))=\#\text{Orb}_{\haw}(E)$-- the minimal polynomial of $j(E)/6912$ is referenced and listed beneath Table \ref{orb456}. 


 \begin{table}[H]
 \[\begin{array}{|c|c|c|}
 \hline
 m_{j(E)/6912}(X)&\#\text{Orb}_H(j(E))&\#\text{Orb}_{\footnoteh}(E)\\
 \hline
 \hline
 4X-1&1&2\\
 \hline
 7X^2+X+1&1&3\\
 \hline
 2X^2-10X-1&2&4\\
 \hline
 13X^4-13X^3+96X^2-16X+1&2&6\\
 \hline
 \makecell{X^6+21X^5+690X^4\\-749X^3+798X^2-33X+1}&3&6\\
 \hline
 \makecell{19X^6+228X^5+1140X^4\\-848X^3+186X^2+3X+1}&3&9\\
 \hline
 \makecell{37X^{12}+3885X^{11}+126651X^{10}\\-477337X^9+337995X^8+641025X^7\\+92400X^6-
      280215X^5+102501X^4\\-17023X^3+1515X^2+6X+1}&3&9\\
      \hline
      \end{array}
      \]
       \caption{Orbit Sizes under $H$ from 1 to 3}
    \label{orb123}
      \end{table}
      
            \begin{table}[H]
      \[\begin{array}{|c|c|c|}
 \hline
 \deg m_{j(E)}(X)&\#\text{Orb}_H(j(E))&\#\text{Orb}_{\footnoteh}(E)\\
 \hline
 \hline
 4\text{ (line (\ref{m4}))}&4&4\\
 \hline
 8&4&8\\
 \hline
 12&4&8\\
 \hline
 24&4&12\\
 \hline
 24&4&12\\
 \hline
 10&5&15\\
 \hline
 20\text{ (line (\ref{m5}))}&5&5\\
      \hline
       60&5&10\\
 \hline
 60&5&15\\
 \hline
  90&5&15\\
 \hline
 \end{array}\hspace{0.5cm}
 \begin{array}{|c|c|c|}
 \hline
 \deg m_{j(E)}(X)&\#\text{Orb}_H(j(E))&\#\text{Orb}_{\footnoteh}(E)\\
 \hline
 \hline
  6\text{ (line (\ref{m6}))}&6&6\\
 \hline
 12&6&12\\
 \hline
 18&6&6\\
  \hline
 24&6&6\\
 \hline
 24&6&12\\
 \hline
 72&6&12\\
 \hline
 72&6&12\\
      \hline
       216&6&18\\
 \hline
 252&6&18\\
 \hline
      \end{array}
      \]
             \caption{Orbit Sizes under $H$ from 4 to 6}
    \label{orb456}
      \end{table}
      \begin{align}\label{m4}
      m_{j(E)/6912}(X)=5X^4-95X^3-15X^2+25X-1
      \end{align}
      \begin{align}\label{m5}
      m_{j(E)/6912}(X)=61X^{20}+30439X^{19}+4239622X^{18}-62797182X^{17}+21844344X^{16}\\+2420526909X^{15}+3841771704X^{14}-12551895336X^{13} \nonumber\\+17727114291X^{12}-13786857050X^{11}+7242436303X^{10} \nonumber\\-1043771609X^{9}-427615959X^{8}+75464286X^{7} \nonumber\\+38297436X^{6}-13780329X^{5}+1911756X^{4} \nonumber\\-142377X^{3}+7081X^{2}+10X+1 \nonumber
      \end{align}
      \begin{align}\label{m6}
     m_{j(E)/6912}(X)=X^6-141X^5-363X^4+1924X^3-741X^2+48X+1
      \end{align}
      
%
 
 
 The most intriguing revelation of these data is the existence of elliptic curves $E$ whose orbit size under $\haw$ is equal to the orbit size of $j(E)$ under $H$; this is a property that is not exhibited in Table \ref{orb123}. Furthermore, the size of $\text{Orb}_{\footnoteh}(E)$ is not necessarily positively correlated with the degree of $m_{j(E)}$, even among $j$-invariants of the same orbit size under $H$. For example, note the $j$-invariants of degrees $10$ and $20$ in Table \ref{orb456}. In the future it may be interesting to replicate these tables over a field of positive characteristic, where the large coefficients will not impose the same limitations on our machines as in characteristic 0.
 \section{Further Work and Questions}
 There is a known connection between the $j$-invariant of an elliptic curve $E$ and its endomorphism ring $\text{End}(E)$. For example, it is shown in Theorem 4.3 of Chapter II of \cite{Sad} that for an elliptic curve $E$ with $j$-invariant $j(E)\in\overline{\Q}$, $[\Q(j(E)):\Q]=h_K$, where $h_K$ is the class number of the field $K=\text{frac}(\text{End}(E))$. This brings us to the following question.
 
 \begin{qstn}
 Can we classify all elliptic curves $E$ for which $\emph{End}(E)=\emph{End}(\haw E)$?
 \end{qstn}
 
 It is worth noting that an elliptic curve $E$ is not necessarily isogenous to its Hesse derivative over $\Q$, even when $j(E)$ has finite orbit over $H$. For example, the curve $E=V(y^2z-x^3-9xz^2)$ has $j$-invariant $1728$ and has Hesse derivative $\haw E=V(24xy^2+216x^2z-648z^3)=V(xy^2+9x^2z-27z^3)$, which is $\Q$-isomorphic to $V(y^2z-x^3+3xz^2)$ under the map $x\mapsto z$, $y\mapsto y$, $z\mapsto x/3$. We can see then that $E$ has isogeny class \href{https://www.lmfdb.org/EllipticCurve/Q/576/c/4}{\textsf{576.c}} and $\haw E$ has isogeny class \href{https://www.lmfdb.org/EllipticCurve/Q/576/i/1}{\textsf{576.i}} in the L-functions and modular forms database \cite{lmfdb}. We do however have $\text{End}(E)=\text{End}(\haw E)=\Z$, and $\text{End}(E_{\overline{\Q}})=\text{End}(\haw E_{\overline{\Q}})=\Z[\sqrt{-1}]$.
 

It is also interesting to further investigate the dynamics of the function $H$. The following question comes from a collaboration between the author and Chris Peterson.
\begin{qstn}
    For which $j\in\Q$ and $q$ prime does the sequence $H^n(j)$ converge in the $q$-adic metric?
\end{qstn}
Experimental evidence computed in Maple appears to corroborate the following conjecture, which comes from a collaboration between the author and Chris Peterson, but their proofs (if they exist) remain elusive.
\begin{conj}
    Let $j\in\R$ be generic and consider the sequence $\{H^n(j)\}_{n\in\N}$. Define \begin{align*}L_n&=\#\{H^i(j):i\leq n,H^i(j)< 0\},\\
    M_n&=\#\{H^i(j):i\leq n,0<H^i(j)< 6912\},\\
    R_n&=\#\{H^i(j):i\leq n,6912<H^i(j)\}.\end{align*}
    Then $$\lim_{n\to\infty}\frac{L_n}{n}=\lim_{n\to\infty}\frac{M_n}{n}=\lim_{n\to\infty}\frac{R_n}{n}=\frac{1}{3}.$$
    \end{conj}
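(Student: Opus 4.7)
The plan is to combine a Markov partition analysis of $H$ on $\widehat{\R}$ with the Latt\`es structure of its lift $F(t) = (4-t^3)/(3t^2)$ along the $j$-invariant. First I would refine $\{L, M, R\}$ to the four-element partition $\{L, M_1, M_2, R\}$ with $L = (-\infty, 0)$, $M_1 = (0, 1728)$, $M_2 = (1728, 6912)$, $R = (6912, \infty)$. The boundary set $\{0, 1728, 6912, \infty\}$ is $H$-forward-invariant: it consists of the real fixed points $1728, \infty$ together with the critical points $0, 6912$ whose images $\infty, 0$ land back in the set. On each monotonicity interval, noting that the remaining real critical point $-13824$ lies interior to $L$, a direct computation gives $H(R) = L$, $H(M_2) = M_1$, $H(M_1) = M_2 \cup R$ as bijections and $H(L) = M_2 \cup R$ as a two-to-one fold at $-13824$, so the partition is genuinely Markov.

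Next I would compute the Parry measure of the induced subshift of finite type. The weighted preimage matrix
\[
A = \begin{pmatrix} 0 & 0 & 2 & 2 \\ 0 & 0 & 1 & 1 \\ 0 & 1 & 0 & 0 \\ 1 & 0 & 0 & 0 \end{pmatrix}
\qquad (\text{rows and columns in the order } L, M_1, M_2, R)
\]
records the number of real $H$-preimages in each row-block of a point in each column-block. Its characteristic polynomial is $\lambda^4 - 3\lambda^2$, with Perron root $\sqrt{3}$ and right and left Perron eigenvectors $v = (2\sqrt{3}, \sqrt{3}, 1, 2)$ and $w = (1, 1, \sqrt{3}, \sqrt{3})$. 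Normalizing by $v \cdot w = 6\sqrt{3}$ yields Parry weights $(1/3, 1/6, 1/6, 1/3)$, so the Parry measure assigns mass exactly $1/3$ to each of $L$, $M = M_1 \cup M_2$, and $R$, matching the conjecture.

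The main obstacle is upgrading this symbolic equidistribution to a statement about forward orbits of Lebesgue-typical $j \in \R$, i.e.\ identifying the Parry measure with the physical (SRB) measure of $H|_{\widehat{\R}}$ on the four partition elements. For this I would use the lift $F$. The four fixed points $1, \zeta_3, \zeta_3^2, \infty$ of $F$ all share multiplier $-3$ (consistent with the rational fixed-point formula $\sum_i (1 - \lambda_i)^{-1} = 1$), and one checks that the post-critical orbifold of $F$ has signature $(2,2,2,2)$ and Euler characteristic $0$. Thurston's classification then identifies $F$ as a Latt\`es map realized by the degree-$3$ isogeny $[1 - \zeta_3]$ on the CM-torus $\C/\Z[\zeta_3]$ descended through the involution $z \mapsto -z$. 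For Latt\`es maps the Brolin--Lyubich measure coincides with the physical measure and is the pushforward of Haar from the torus; transporting along the degree-$12$ projection $j \colon \widehat{\C}_t \to \widehat{\C}_j$ yields an explicit absolutely continuous $H$-invariant measure on $\widehat{\C}_j$.

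The hardest step will be the descent to the one-dimensional real locus, since $\widehat{\R}$ is Lebesgue-null in $\widehat{\C}$ and the higher-dimensional Birkhoff statement for the Latt\`es measure does not transfer directly to real initial data. One would either run a one-dimensional Lasota--Yorke estimate on the four-branch Markov system to produce an $H$-invariant acim on $\widehat{\R}$ and match its four marginals against the Parry weights via the Latt\`es parametrization, or alternatively argue that the anti-holomorphic involution on $\C/\Z[\zeta_3]$ produces a $[1-\zeta_3]$-invariant one-dimensional Haar measure on the $\wp$-preimage of $\widehat{\R}$ whose pushforward delivers the required SRB measure. Either route would close the gap between the symbolic calculation above and the pointwise Birkhoff statement demanded by the conjecture.
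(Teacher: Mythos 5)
First, a point of comparison: the paper does not prove this statement at all --- it is explicitly left as a conjecture supported only by Maple experiments (``their proofs (if they exist) remain elusive''), so there is no argument of the author's to measure yours against. Your preparatory computations do check out: the interval images $H(R)=L$, $H(M_2)=M_1$, $H(M_1)=M_2\cup R$, and the two-to-one fold of $L$ onto $M_2\cup R$ at the critical point $-13824$ are correct, the weighted matrix, its Perron root $\sqrt{3}$, and the Parry weights $(1/3,1/6,1/6,1/3)$ are computed correctly, and the lift $F(t)=(4-t^3)/(3t^2)$ is indeed a Latt\`es map (four fixed points $1,\zeta_3,\zeta_3^2,\infty$ of multiplier $-3$, postcritical set equal to the fixed points, orbifold $(2,2,2,2)$, realized by an associate of $[\sqrt{-3}]$ on $\C/\Z[\zeta_3]$). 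In fact you could shortcut the lift: $H$ itself is postcritically finite with parabolic orbifold of signature $(2,3,6)$ (weights $2,3,6$ at $1728,0,\infty$), so $H$ is already a Latt\`es-type map with absolutely continuous maximal-entropy measure on $\widehat{\C}$.

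However, the proposal does not close the conjecture, and the two gaps you flag are not technicalities but the entire content of the statement. (i) The Parry measure you compute is the measure of maximal entropy of the symbolic model; for a typical Markov interval map this is \emph{not} the absolutely continuous (physical) measure, and no argument is offered that the two agree for $H|_{\widehat{\R}}$ --- but ``the physical measure assigns mass $1/3$ to each of $L,M,R$'' is exactly what is to be proved, so the matching step cannot be waved through. (ii) The passage from the sphere to the real line is genuinely obstructed: $\widehat{\R}$ is null for the planar Latt\`es equidistribution, and the real map is not uniformly expanding because the critical point $-13824$ lies in the interior of $L$, so a Lasota--Yorke bounded-variation estimate does not apply as stated; one would need Misiurewicz-type results for interval maps with nonrecurrent critical orbits (the critical value $1728$ is a repelling fixed point, so this is plausible) to obtain an acim and its ergodicity, and even then its masses on $L,M_1,M_2,R$ must be computed by some independent means --- e.g.\ by identifying the $j\circ\wp$-preimage of $\widehat{\R}$ on the torus as a finite union of $[\sqrt{-3}]$-invariant circles and pushing forward arc length, which you sketch but do not verify (invariance of those circles, that a.e.\ real $j$ lifts, and the resulting explicit density and its integrals over $L$, $M$, $R$). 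As it stands the proposal is a sensible research program consistent with the numerics, not a proof.
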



\begin{thebibliography}{99}


%

%

%

%

%
\bibitem{ATT} Ane Anema, Jaap Top, and Anne Tuijp, \textit{Hesse Pencils and 3-Torsion Structures}, SIGMA, Vol. 14, Number 102, (2018).

\bibitem{A} Tom Apostol, \textit{Introduction to Analytic Number Theory}, Undergraduate Texts in Mathematics, (1976).

\bibitem{AD} Michela Artebani and Igor Dolgachev, \textit{The Hesse pencil of plane cubic curves}, L'Enseign Math, Vol. 55, (2006).  \sc{doi}: \normalfont \href{https://ems.press/journals/lem/articles/12146}{\texttt{10.4171/LEM/55-3-3}}.

\bibitem{B} Alan Beardon, \textit{Iteration of Rational Functions}, Graduate Texts in Mathematics, Number 132, (1991).

\bibitem{CS} Fabrizio Catanese and Edoardo Sernesi, \textit{Geometric endomorphisms of the Hesse moduli space of elliptic curves}, Annali dell'Universit\`a di Ferrara 70 (2024), pp. 781--810. \sc{doi}: \normalfont \href{https://doi.org/10.1007/s11565-024-00502-y}{\texttt{https://doi.org/10.1007/s11565-024-00502-y}}.

\bibitem{CO} Ciro Ciliberto and Giorgio Ottaviani, \textit{The Hessian Map}, International Mathematics Research Notices, (2020). \sc{url}: \normalfont \href{https://api.semanticscholar.org/CorpusID:219635910}{\texttt{https://api.semanticscholar.org/CorpusID:219635910}}.

\bibitem{D} Igor Dolgachev, \textit{Classical Algebraic Geometry: A Modern View}, Cambridge University Press, 2012.

\bibitem{DHH} Sayan Dutta, Lorenz Halbeisen, and Norbert Hungerb\"uhler, \textit{Properties of Hesse derivatives of cubic curves}, arXiv.2309.05048, (2023). \sc{doi}: \normalfont \href{https://doi.org/10.48550/arXiv.2309.05048}{\texttt{https://doi.org/10.48550/arXiv.2309.05048}}.

\bibitem{M2} Daniel Grayson and Michael Stillman, \textit{Macaulay2, a software system for research in algebraic geometry}, available at \href{https://macaulay2.com/}{\texttt{https://macaulay2.com/}}.

\bibitem{lmfdb} The LMFDB Collaboration, \textit{The L-functions and modular forms database}, \href{https://www.lmfdb.org}{\texttt{https://www.lmfdb.org}}. [Online; accessed 20 October 2025].

\bibitem{M} John Milnor, \textit{Dynamics in One Complex Variable}, Annals of Mathematics Studies, Number 160, (2006).

\bibitem{MPT} Marzio Mula, Federico Pintore, and Daniele Taufer, \textit{The Hessian of elliptic curves}, arXiv.2407.17042, (2024). \sc{doi}: \normalfont \href{https://doi.org/10.48550/arXiv.2407.17042}{\texttt{https://doi.org/10.48550/arXiv.2407.17042}}.

\bibitem{PP} Patrick Popescu-Pampu, \textit{Iterating the hessian: a dynamical system on the moduli space of elliptic curves and dessens d'enfants}, Advanced Studies in Pure Mathematics 55 (2009).

\bibitem{Sad} Joseph Silverman, \textit{Advanced Topics on the Arithmetic of Elliptic Curves}, Graduate Texts in Mathematcs, Number 151, (1994).

\bibitem{S} Joseph Silverman, \textit{Arithmetic of Elliptic Curves}, Graduate Texts in Mathematics, Number 106, (1986).
%

%

%

%

%
\bibitem{Mathematica} Wolfram Research, Inc., \textit{Mathematica, Version 14.2}. Champaign, IL, 2024. \sc{url}\normalfont: \href{https://www.wolfram.com/mathematica}{\texttt{https://www.wolfram.com/mathematica}}.      
\end{thebibliography}
\end{document}